\documentclass[12pt]{elsarticle}

\usepackage{amssymb}
\usepackage{amsmath}
\usepackage{amsthm}
\usepackage{wrapfig}
\usepackage{graphicx}

\newtheorem{prop}{Proposition}
\newtheorem{corr}{Corollary}

\newdefinition{rem}{Remark}
\newdefinition{defi}{Definition}

\newcommand{\mS}{\mathcal{S}}

\newcommand{\ex}{\textnormal{exp}}
\newcommand{\beq}{\begin{equation}}
\newcommand{\eeq}{\end{equation}}
\newcommand{\dd}{\partial}
\newcommand{\gd}{\dot{\gamma}}

\journal{*****}

\begin{document}

\begin{frontmatter}

\title{Bifurcations of the conjugate locus}

\author{Thomas Waters\fnref{label2}}
\ead{thomas.waters@port.ac.uk}
\address{Department of Mathematics, University of Portsmouth, England PO13HF}

\begin{abstract}
The conjugate locus of a point $p$ in a surface $\mS$ will have a certain number of cusps. As the point $p$ is moved in the surface the conjugate locus may spontaneously gain or lose cusps. In this paper we explain this `bifurcation' in terms of the vanishing of higher derivatives of the exponential map; we derive simple equations for these higher derivatives in terms of scalar invariants; we classify the bifurcations of cusps in terms of the local structure of the conjugate locus; and we describe an intuitive picture of the bifurcation as the intersection between certain contours in the tangent plane.
\end{abstract}

\begin{keyword}
geodesics \sep conjugate locus \sep Jacobi field \sep geodesic deviation \sep bifurcation
\end{keyword}

\end{frontmatter}

\section{Introduction}

The conjugate locus, and its relative the cut locus, are classical objects in Differential Geometry and have been studied deeply by many mathematicians since the middle of the 19th century (some important works are \cite{jacobi}, \cite{poincare},\cite{myers},\cite{whitehead}). Of particular relevance to this paper is the so-called ``last geometric statement of Jacobi'', which asserts (among other things) that the conjugate locus of a non-umbilic point on the triaxial ellipsoid has precisely 4 cusps (see \cite{Sinclair1} for a historical sketch and list of references). This conjecture was recently proved by Itoh and Kiyohara \cite{Itoh1}, and a renewed interest in the conjugate and cut locus can be seen in the recent papers providing formal studies (\cite{grav}, \cite{Sinclair3}, \cite{Itoh3}, \cite{Itoh2}), simulations (\cite{Sinclair2}, \cite{Bonnard2}, \cite{Sinclair5}, \cite{Sinclair4}, \cite{Sinclair1}) and applications (\cite{Bonnard1}, \cite{Bonnard3}, \cite{bloch}, \cite{Bonnard4}, \cite{Bonnard5}).

It is no surprise that the papers which focused on the triaxial ellipsoid and surfaces of revolution made heavy use of the fact that the geodesic flow on those surfaces is (Liouville) integrable. However, surfaces for which a second integral exists are few and far between, and some recent papers of the author (\cite{TWspherical},\cite{TWmonge},\cite{TWetds}) showed that for even simple surfaces the geodesic flow may be non-integrable and indeed chaotic. We could say that surfaces with integrable geodesic flow are exceptional, and an underlying goal of this paper is to develop techniques to understand the fine structure of the geodesic flow {\it without} relying on additional integrals.

\begin{figure}
\begin{picture}(200,170)
\put(0,0){{\includegraphics[width=0.25\textwidth]{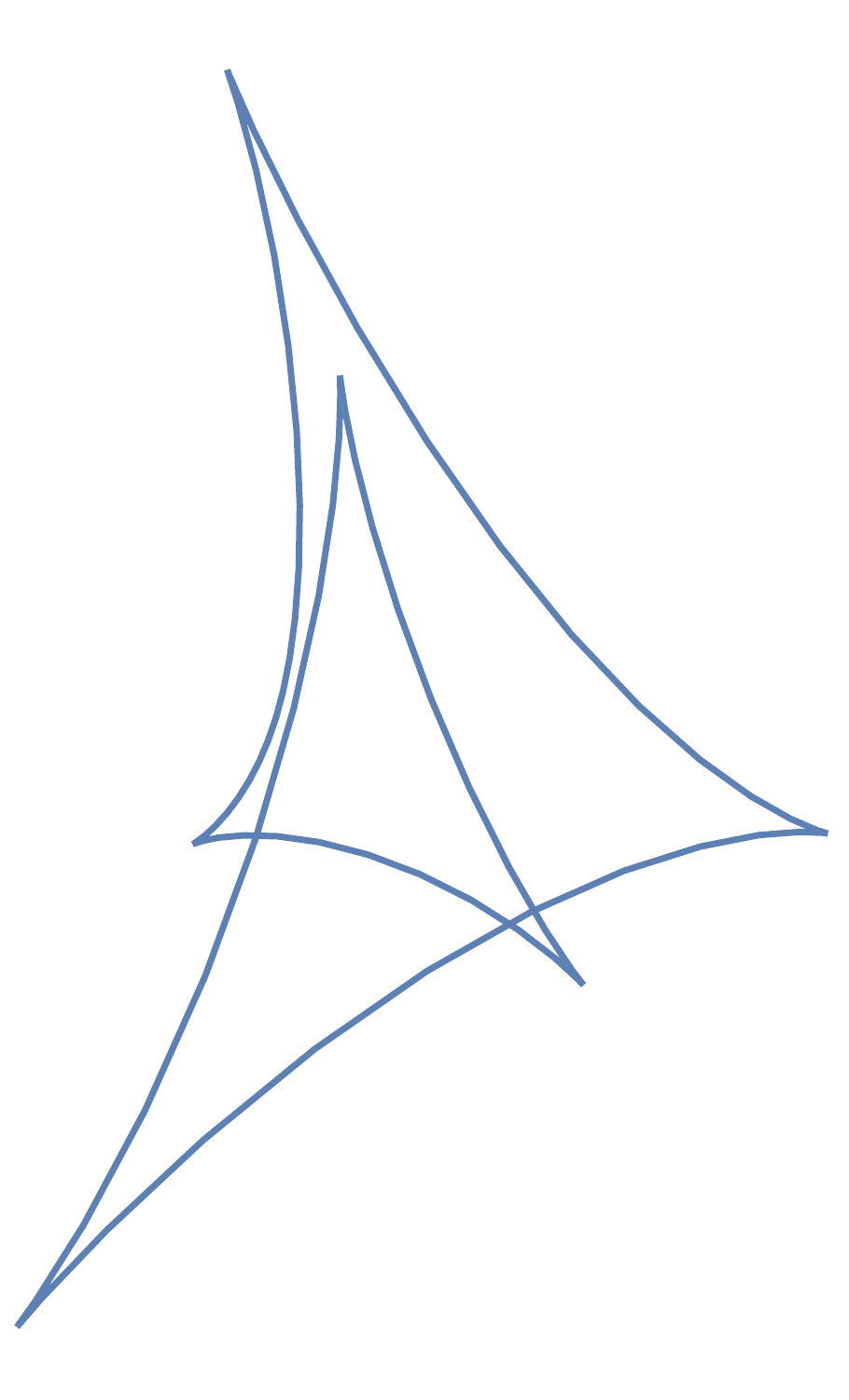}\includegraphics[width=0.25\textwidth]{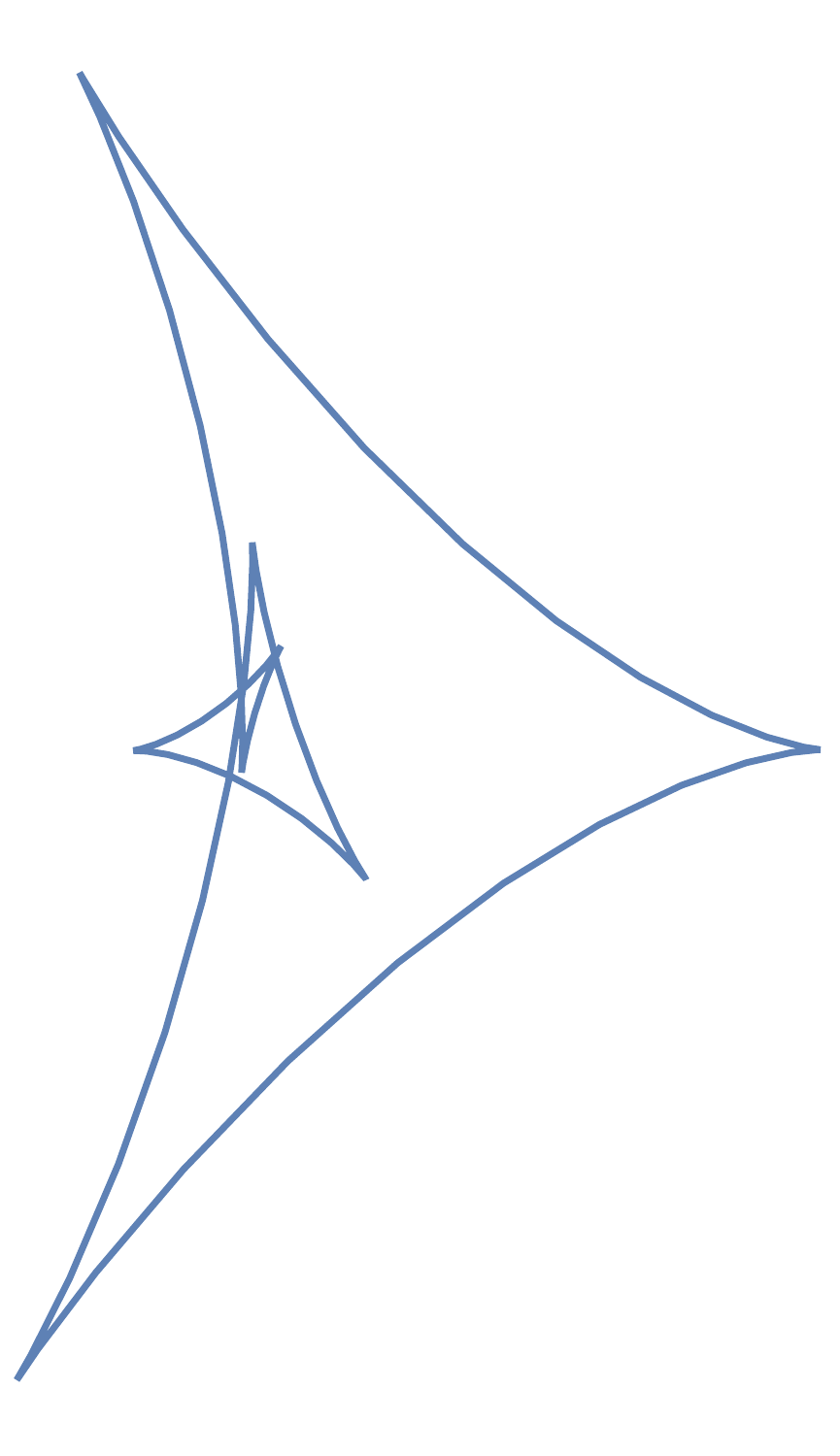}\includegraphics[width=0.28\textwidth]{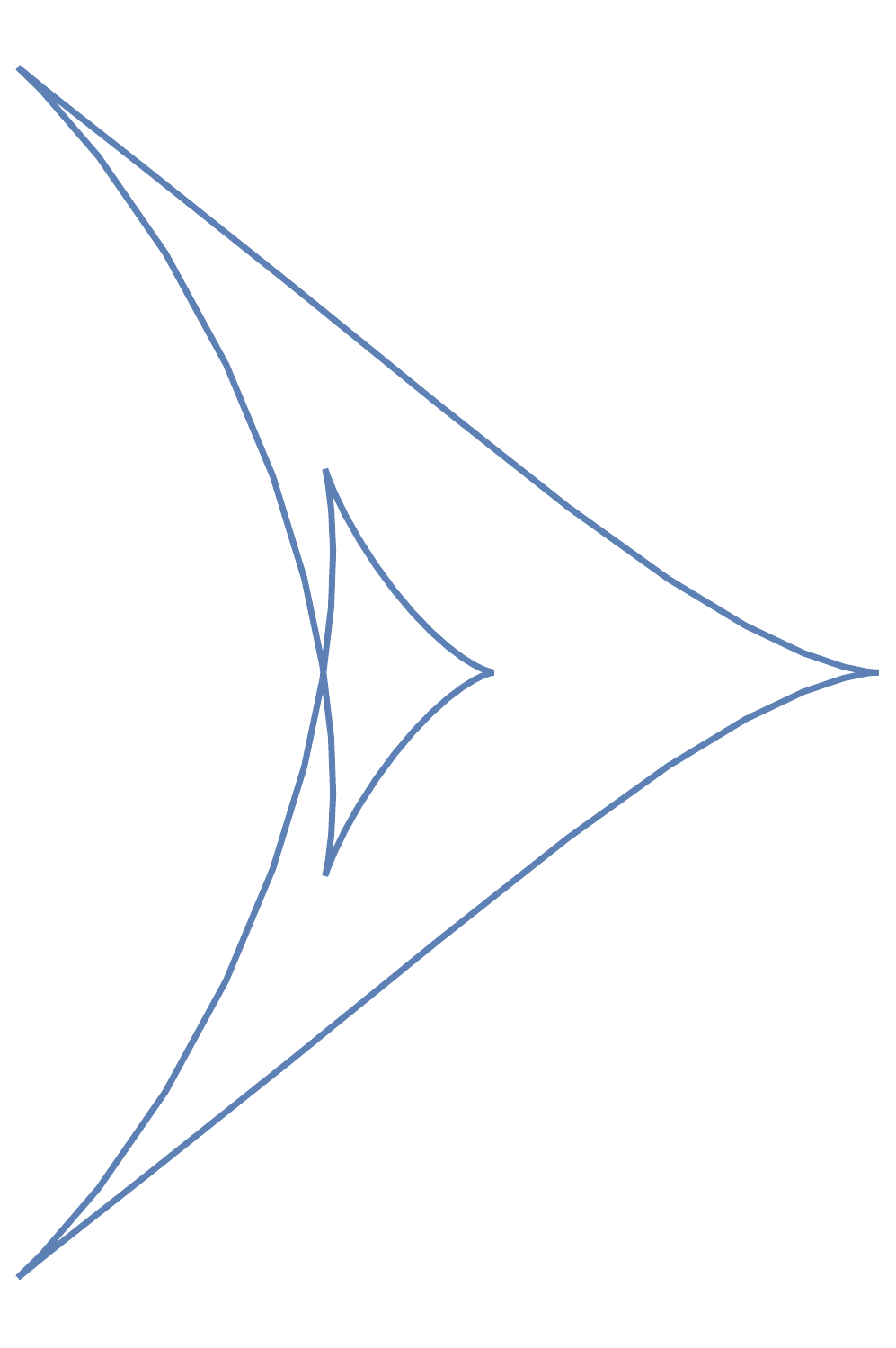}\includegraphics[width=0.235\textwidth]{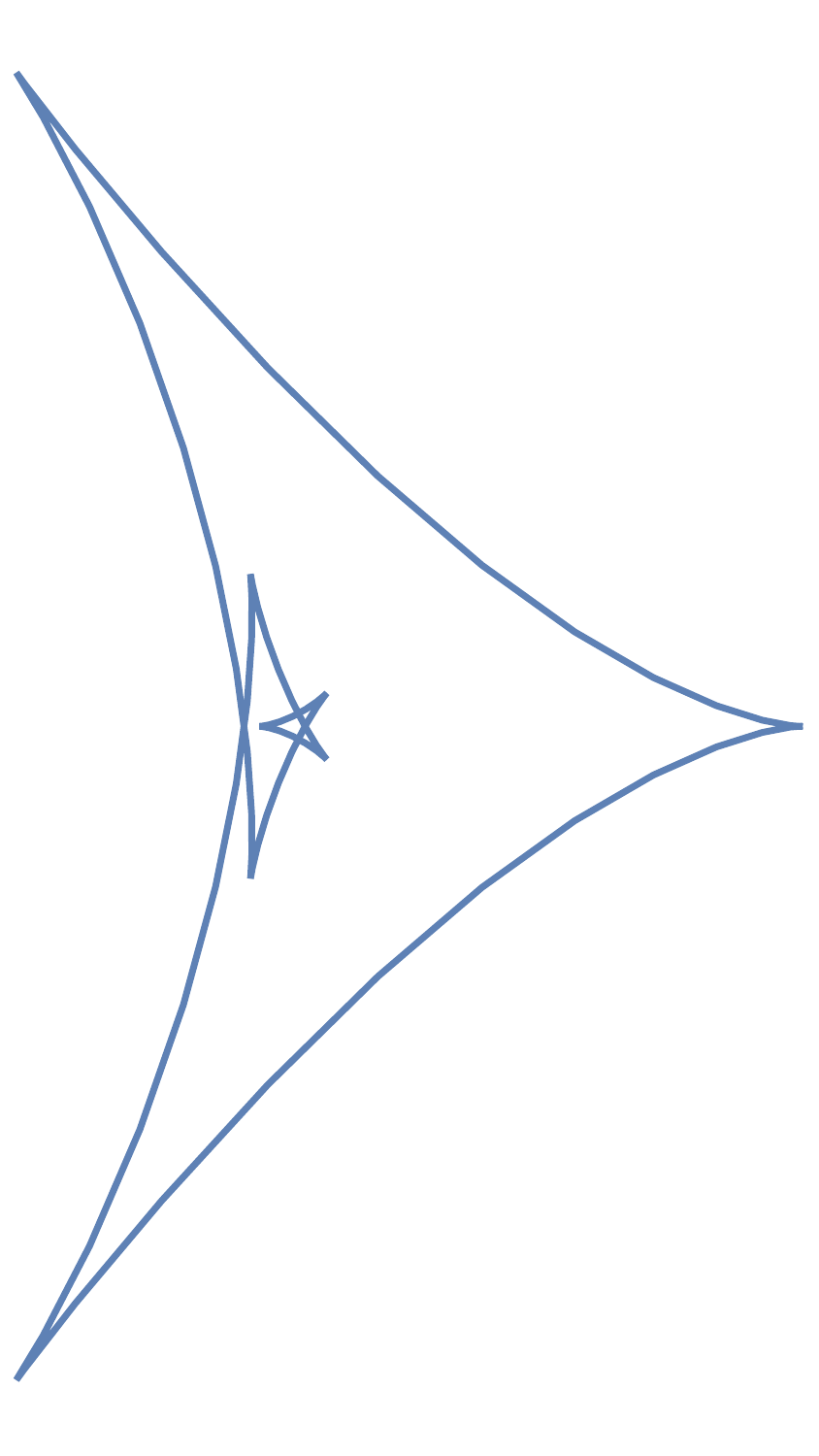}}}
\put(40,20){\hbox{(a)}}\put(140,20){\hbox{(b)}}\put(240,20){\hbox{(c)}}\put(340,20){\hbox{(d)}}
\end{picture}
\caption{The conjugate locus for various points on the $n=3$ sectoral harmonic surface (projected onto the tangent plane of the antipode); see Figure \ref{regions}.} \label{cpvarious}
\end{figure}

This paper will focus on the following phenomenon: let $p$ be a point in a smooth 2-dimensional surface $\mS$ and let $C_p$ be the conjugate locus of $p$ in $\mS$. As $p$ is moved on the surface the number of cusps of $C_p$ may vary (this is the `bifurcation' referred to in this paper's title). This is well known on the ellipsoid: as $p$ passes through an umbilic point, the conjugate locus degenerates from a curve with 4 cusps to simply a point (the antipodal umbilic). More elaborately, let us take the surfaces defined in terms of the spherical harmonic functions \cite{wangguo}, where for the sake of demonstration we will focus on the sectoral harmonics defined in polar form via (see \cite{TWspherical} for a proof that the geodesic flow on this surface is not integrable) \[ r(\theta,\phi)=1+\epsilon \sin^n\theta \cos(n \phi), \qquad (\theta,\phi)\in(0,\pi)\times(0,2\pi),\ \epsilon\in[0,1),\ n\in\mathbb{N}.  \] Now as $p$ is varied the conjugate locus may develop additional cusps where previously there was a smooth arc (for example (a) to (b) in Figure \ref{cpvarious}) or a cusp  ((c) to (d) in Figure \ref{cpvarious}). In fact the surface is divided into regions where the conjugate locus of a point in each region will have either 6 cusps or 8 cusps when $n=3$ (see Figure \ref{regions} in the Appendix); for larger values of $n$ the situation is more complex. 

In Section 2 we will first specify the two bifurcation scenarios of interest, and then we will show that conjugate points, cusps of the conjugate locus, and bifurcations of these cusps are determined by the vanishing of the first, second and third derivatives of the exponential map respectively (while we use the term `bifurcation' we will find the language of singularity theory more appropriate). Equations for these higher derivatives are derived in Sections 3 and 4. These equations are known in various forms in the General Relativity literature (\cite{bazanski},\cite{Hodg},\cite{AP},\cite{Vines}) where the Jacobi equation is known as the `geodesic deviation equation' \cite{wald}, however we go further and cast these equations in terms of scalar invariants which (i) makes the analysis clearer (ii) reduces each order to a single scalar ODE, and (iii) facilitates similar analyses on surfaces not defined in parameterised form. We go on in Section 5 to classify the cusps of the conjugate locus. We finish in Section 6 with some further comments.

We adopt the convention that indices are only used where necessary. We will focus on 2 dimensional smooth surfaces however equations \eqref{jacobi},\eqref{baz} and \eqref{j3eq} apply to manifolds of any dimension.

\section{The distance function and its singularities}

Let $\gamma(s,\psi)$ be a family of unit-speed geodesics emanating from $p\in\mS$, where $s\in\mathbb{R}$ parameterises each geodesic and $\psi\in(0,2\pi)$ labels the members of the family; as such $s,\psi$ parameterise the neighbourhood of $p$ and are called the geodesic polar coordinates. We shall let $x$ denote the exponential map defined via $\ex:T_p\mS\to\mS:\ex(s v_\psi)=\gamma(s,\psi)$ where $v_\psi=\dot{\gamma}(0,\psi)$ (here and throughout dots shall denote derivatives w.r.t.\ $s$). Consider the vector field $J_1=\partial x/\partial \psi$ restricted to some geodesic in the family $\psi=\psi_0$. $J_1$ is a Jacobi field and satisfies Jacobi's equation \begin{equation} \frac{D^2J_1^a}{\partial s^2}+R^a_{\ bcd}\dot{\gamma}^bJ_1^c\dot{\gamma}^d=0.\label{jacobi} \end{equation} If we let $\{T=\dot{\gamma}, N\}$ be an orthonormal frame parallel transported along radial geodesics, we can write $J_1=\eta_1 T+\xi_1 N$ and the Jacobi equation separates into two scalar ODE's: \beq \ddot{\eta}_1=0,\quad \ddot{\xi}_1+K\xi_1=0, \label{firstord} \eeq where $K$ in the Gauss curvature. To find the points conjugate to $p$ along $\psi=\psi_0$ we use initial data $\eta_1(0)=\dot{\eta}_1(0)=0$ and $\xi_1(0)=0,\ \dot{\xi}_1(0)=1$; hence the tangential component to $J_1$ is trivial and we need only focus on the normal component, $\xi_1$. If there is some $s=R$ such that  $\xi_1(R)=0$, then $x(R(\psi_0),\psi_0)$ is conjugate to $p$ along $\psi=\psi_0$. The set of points conjugate to $p$ for $\psi\in\mathbb{S}^1$ is the conjugate locus of $p$, denoted $C_p$.

The following is well known: there is a smooth curve in $T_p\mS$ (which may have more than one component, but we assume is not empty) parameterised by $\psi$ as $R=R(\psi)$, and if $R'(\psi_0)=0$ then the image of $R$ in $\mS$ (i.e.\ $C_p$) has a cusp at $x(R(\psi_0),\psi_0)$. As we vary the base point $p$, the curve $R=R(\psi)$ may develop or lose stationary points and hence $C_p$ may develop or lose cusps; this process is the `bifurcation' we are interested in.

\begin{wrapfigure}{r}{0.41\linewidth}
\vspace{-0.5cm}
  \includegraphics[width=0.4\textwidth]{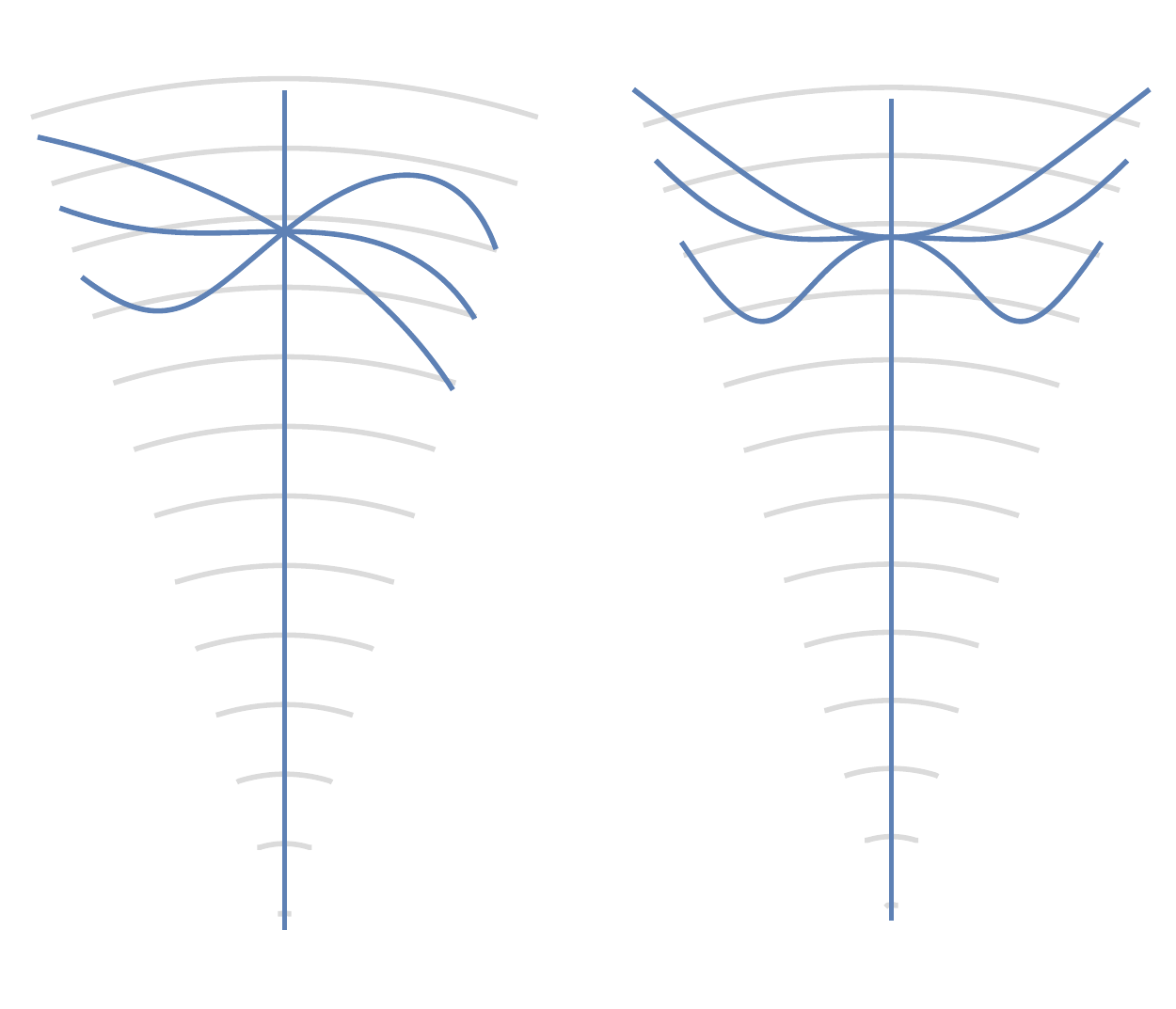} \caption{Representative sketches of the two bifurcation scenarios described in the text.}\label{bifpic}
  \vspace{-0.5cm}
\end{wrapfigure}


Standard terminology is that $R$ has an $A_k$ singularity at $\psi_0$ if the first $k$ derivatives of $R$ vanish at $\psi_0$, but not the $(k+1)$th. We discard as uninteresting the $A_1$ singularity, as this simply represents a cusp moving along $C_p$. The versal unfolding of the $A_2$ singularity (i.e.\ the embedding of $R$ in a parameterized family of functions with the simplest form \cite{bruce}) is given by \[ \mu\, \delta\psi+R^{(3)}_0\frac{\delta\psi^3}{6}, \] and as the unfolding parameter $\mu$ passes through zero the number of stationary points of $R$ in the vicinity of $\psi_0$ passes from 0 to 2 (or vice versa), see the left of Figures \ref{bifpic}, \ref{cpvarious} and \ref{cusps}. We shall refer to this as the `arc' bifurcation, although the terms `fold' \cite{kuz} and `swallowtail' \cite{bruce} might also be appropriate. While the $A_3$ singularity may seem special it is also of interest, since when $\gamma(s,\psi_0)$ lies along a line of symmetry of $\mS$ then $R$ is an even function in $\delta\psi$, and the versal unfolding of the $A_3$ singularity is (assuming $p$ is moved along this line of symmetry) \beq \mu\frac{\delta\psi^2}{2}+R^{(4)}_0\frac{\delta\psi^4}{24} \label{symm}. \eeq As $\mu$ passes through zero the number of stationary points passes from 1 to 3 (and vice versa), see the right of Figures \ref{bifpic},\ref{cpvarious} and \ref{cusps}. We will refer to this as the `cusp' bifurcation, although the terms `pitchfork' and `butterfly' might also be appropriate.

The problem is we do not know what $R$ and its derivatives are, nor do we have any equations for them. We will now show that singularities of $R$ are due to the simultaneous vanishing of higher derivatives of the exponential map w.r.t.\ $\psi$, and in the next sections we will derive equations for these derivatives. 

\begin{prop} Suppose a geodesic $\gamma(s,\psi)$ emanating from $p$ reaches a conjugate point at $s=R(\psi)$, as described in the text. Then $R$ will have an $A_k$ singularity at $\psi$ if and  only if the first $k+1$ covariant derivatives of $\ex$ w.r.t.\ $\psi$ vanish at $\gamma(R(\psi),\psi)$. \end{prop}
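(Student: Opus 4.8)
The plan is to set up explicit Taylor expansions in $\delta\psi$ for both the conjugate-point radius $R(\psi+\delta\psi)$ and the exponential map $\ex$ along the geodesic, and match orders. First I would recall the defining relation for the conjugate locus: along $\psi$, the point $\gamma(R(\psi),\psi)$ is characterised by $\xi_1(R(\psi),\psi)=0$, where $\xi_1$ is the normal component of the first Jacobi field $J_1=\partial x/\partial\psi$. The key observation is that the composite curve $\psi\mapsto \ex(R(\psi)\,v_\psi) = \gamma(R(\psi),\psi)$ is exactly $C_p$, so its $\psi$-derivatives are obtained by the chain rule: $\frac{d}{d\psi}\,\ex(R(\psi)v_\psi) = R'(\psi)\,\dot\gamma + J_1$, evaluated at $s=R(\psi)$, and at a conjugate point $\xi_1=0$ forces $J_1 = \eta_1 T$ to be purely tangential. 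So the tangent to $C_p$ is $(R'+\eta_1)\,T$, which vanishes precisely when $R'=-\eta_1$. The strategy is to show inductively that the vanishing of the first $k$ derivatives of $R$ is equivalent to the vanishing of the first $k+1$ covariant derivatives of $\ex$ w.r.t.\ $\psi$ at that point, by differentiating this chain-rule expression repeatedly and using \eqref{firstord} together with the higher Jacobi-type equations of Sections 3--4 to control the higher $\psi$-derivatives $J_2 = \partial^2 x/\partial\psi^2$, $J_3$, etc.

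The main steps, in order, are: (1) Write $\frac{\partial^m \ex}{\partial\psi^m}\big|_{(R(\psi),\psi)}$ via the multivariate chain rule in terms of the fields $J_1,\ldots,J_m$ (the pure $\psi$-derivatives of $x$ holding $s$ fixed) and the derivatives $R',\ldots,R^{(m-1)}$; the leading term is always $J_m + (\text{lower-order in }J)\cdot R^{(\cdot)} + \dot\gamma\, R^{(m-1)}$ or similar. (2) Decompose each $J_i$ into tangential and normal parts $\eta_i T + \xi_i N$; the normal parts $\xi_i$ satisfy the scalar ODEs of the later sections, and crucially $\xi_1(R)=0$ by the conjugate-point hypothesis. (3) Argue by induction on $k$: assuming $R'=\cdots=R^{(k-1)}=0$ and the first $k$ covariant $\psi$-derivatives of $\ex$ vanish, show that the $(k+1)$st covariant $\psi$-derivative of $\ex$ reduces (modulo the vanishing lower terms) to a single expression proportional to $R^{(k)}\,\dot\gamma$ plus a term built from $\xi_{k}$-type quantities that is already forced to vanish or already accounted for, so that this derivative vanishes iff $R^{(k)}=0$. (4) Read off the two directions of the biconditional from this reduction.

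The step I expect to be the main obstacle is (3): keeping track of exactly which combinations of the $\xi_i$ and $\eta_i$ appear at each order, and verifying that after imposing the inductive hypothesis the $(k+1)$st derivative really does collapse to something strictly proportional to $R^{(k)}$ (so that no spurious extra condition sneaks in, and conversely no cancellation makes the equivalence fail in one direction). This requires care because $J_1$ being purely tangential at a conjugate point is special to the first field; the higher $J_i$ need not be, and one must check that their normal components, governed by the inhomogeneous versions of $\ddot\xi+K\xi=0$ from Sections 3--4, do not independently obstruct the vanishing. A clean way to organise this is to note that the normal component of the $m$th covariant $\psi$-derivative of $\ex$ at the conjugate point, after using $\xi_1(R)=0$, depends on $R',\ldots,R^{(m-2)}$ only, while the tangential component supplies the fresh $R^{(m-1)}$ term; then the induction decouples the two components cleanly and the biconditional follows by tracking a single scalar at each order.
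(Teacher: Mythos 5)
Your overall strategy (implicitly differentiate the conjugate-point condition in $\psi$ and induct) is the right one, but two concrete points in your setup would derail the execution. First, you keep $\eta_1$ as a live quantity (``the tangent to $C_p$ is $(R'+\eta_1)T$, which vanishes precisely when $R'=-\eta_1$''). With the radial initial data $\eta_1(0)=\dot{\eta}_1(0)=0$ the tangential component is identically zero, so the conjugate-point condition is the full vector identity $J_1^a(R(\psi),\psi)=0$ --- not merely $\xi_1(R)=0$ with a possibly nontrivial tangential remainder. That identity is the seed of the whole argument: the paper's proof simply applies $D/\partial\psi$ to it $k$ times along $s=R(\psi)$, obtaining at each order
\[ \frac{DJ_1^a}{\partial s}\,R^{(k)}+\frac{D^kJ_1^a}{\partial\psi^k}+\bigl(\hbox{terms carrying } R',\ldots,R^{(k-1)}\bigr)=0, \]
and the biconditional falls out because $DJ_1^a/\partial s=\dot{\xi}_1N^a\neq0$ at a conjugate point ($\xi_1$ and $\dot{\xi}_1$ cannot vanish together). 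No Taylor matching is needed and, importantly, none of the higher Jacobi-type ODEs of Sections 3--4 enter: the proposition is a pure chain-rule statement, independent of what $\xi_2,\xi_3$ actually satisfy.

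Second, your ``clean organisation'' at the end has the components backwards, and this is not cosmetic. The fresh derivative $R^{(k)}$ enters $J_{k+1}|_{s=R}$ through the coefficient $DJ_1^a/\partial s=\dot{\xi}_1N^a$, i.e.\ in the \emph{normal} component; the tangential components ($\eta_2=-\xi_1\dot{\xi}_1$, $\eta_3=-\xi_1\dot{\xi}_2-2\dot{\xi}_1\xi_2$, \ldots) vanish automatically at the relevant points and carry no new information --- indeed that is the point of Sections 3--4. You also never invoke the nondegeneracy $\dot{\xi}_1(R)\neq0$, which is the linchpin of the ``only if'' direction. Relatedly, you are differentiating the composite curve $\alpha(\psi)=x(R(\psi),\psi)$: its derivatives contain the extra $R^{(m)}T^a$ terms and are the right objects for classifying the cusps of $C_p$ (that is the second proposition of the paper), whereas the present statement concerns the partial derivatives $D^{m}x/\partial\psi^{m}$ evaluated at $s=R(\psi)$; conflating the two is what produces your off-by-one bookkeeping. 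Fix these points and the induction closes essentially as you intend.
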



\begin{proof} By definition, $R=R(\psi)$ is the value of $s$ along $\gamma(s,\psi)$ where $\partial x/\partial\psi\equiv J_1$ vanishes, i.e. \[ J_1^a(R(\psi),\psi)=0. \] Taking the covariant derivative w.r.t. $\psi$, \[ \frac{DJ^a_1}{\partial s}R'+\frac{DJ_1^a}{\partial \psi}=0, \] where all derivatives are evaluated on $s=R(\psi)$. Hence $R$ has an $A_1$ singularity iff the second derivative of the exponential map w.r.t.\ $\psi$ vanishes (since $DJ_1^a/\partial s\neq 0$ on $s=R$). Differentiating again, \[ \frac{DJ_1^a}{\partial s}R''+R'\left(\frac{D^2J_1^a}{\partial s^2}R'+\frac{D^2J_1^a}{\partial \psi \partial  s}+\frac{D^2J_1^a}{\partial s \partial  \psi} \right)+\frac{D^2J_1^a}{\partial \psi^2}=0. \] Hence $R$ has an $A_2$ singularity iff the second and third derivatives of $\ex$ vanish; continuing this way the proposition follows by induction. \end{proof}

The following corollary is immediate:

\begin{corr} Along a geodesic emanating from $p$: there is a conjugate point where the first derivative of $\ex$ w.r.t.\ $\psi$ vanishes; there is a cusp of the conjugate locus where the first and second derivatives vanish; there is an arc bifurcation where the first, second and third derivatives vanish; and if the geodesic lies in a symmetry plane of $\mS$ (and $p$ is moved along this geodesic), there is a cusp bifurcation where the first, second and third derivatives vanish.
\end{corr}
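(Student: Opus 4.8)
The plan is to obtain all four clauses from the recursion established inside the proof of the Proposition, which (up to the factor $DJ_1^a/\partial s$, nonzero at a conjugate point) expresses each derivative of $R$ at $\psi_0$, given the vanishing of the lower ones, as a covariant $\psi$-derivative of $\ex$ evaluated at $\gamma(R(\psi_0),\psi_0)$. A conjugate point along $\gamma(\cdot,\psi)$ is by definition a zero of $J_1=\partial x/\partial\psi$, and $J_1$ is exactly the first covariant derivative of $\ex$ in the $\psi$-direction — this is the first clause. For a cusp, recall from the text that $C_p$ has a cusp at $x(R(\psi_0),\psi_0)$ precisely when $R'(\psi_0)=0$; from the relation $\frac{DJ_1^a}{\partial s}R'+\frac{DJ_1^a}{\partial\psi}=0$ together with $\frac{DJ_1^a}{\partial s}\neq 0$, this is equivalent to the vanishing of $\frac{DJ_1^a}{\partial\psi}$, i.e.\ of the second covariant derivative of $\ex$; combined with $J_1=0$ this says the first two derivatives vanish. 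The arc bifurcation was defined to be the $A_2$ singularity, so additionally $R''(\psi_0)=0$; differentiating once more and using $R'(\psi_0)=0$ leaves $\frac{DJ_1^a}{\partial s}R''+\frac{D^2J_1^a}{\partial\psi^2}=0$, whence $R''(\psi_0)=0$ iff the third covariant derivative $\frac{D^2J_1^a}{\partial\psi^2}$ vanishes, giving the vanishing of the first three.

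For the cusp bifurcation I would first record the symmetry input. If $\gamma(\cdot,\psi_0)$ lies along a line of symmetry of $\mS$ there is a reflection isometry $\sigma$ of $\mS$ fixing $p$ and fixing this geodesic pointwise; in geodesic polar coordinates $\sigma$ acts as $(s,\psi)\mapsto(s,2\psi_0-\psi)$, and since isometries preserve geodesics and conjugate points we get $R(2\psi_0-\psi)=R(\psi)$, so $R$ is even about $\psi_0$ and $R'(\psi_0)=R'''(\psi_0)=0$ identically. By the relation above this means the second covariant derivative of $\ex$ also vanishes automatically at every point of $C_p$ met by the symmetry geodesic. The cusp bifurcation was defined to be the $A_3$ singularity, which under this evenness collapses to the single genuine condition $R''(\psi_0)=0$ (with $R^{(4)}(\psi_0)\neq 0$), and by the previous paragraph $R''(\psi_0)=0$ iff the third covariant derivative of $\ex$ vanishes. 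Hence the cusp bifurcation occurs exactly when the first, second and third covariant derivatives of $\ex$ w.r.t.\ $\psi$ all vanish at $\gamma(R(\psi_0),\psi_0)$, the first two of them holding automatically.

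I do not expect a genuine obstacle here, since this is a corollary of the Proposition; the one point that needs care — and the reason the cusp bifurcation is worth stating separately — is the apparent discrepancy between the $A_3$ singularity (four covariant derivatives of $\ex$ according to the Proposition) and the three derivatives asserted in the statement. This is resolved precisely by the symmetry: it forces $R'(\psi_0)=R'''(\psi_0)=0$ for free, so the vanishing of the second covariant derivative of $\ex$ is automatic and the vanishing of the third is the only nontrivial requirement.
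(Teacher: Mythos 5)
Your proposal is correct and follows exactly the route the paper intends: the paper gives no separate proof (it declares the corollary ``immediate''), and you simply read each clause off the recursion in the Proposition's proof, which is the right thing to do. Your explicit resolution of the apparent mismatch in the symmetric case --- the $A_3$ singularity nominally needs four vanishing derivatives of $\ex$, but evenness of $R$ makes $R'=R'''=0$ automatic so only the third derivative's vanishing is a genuine condition --- is a correct and worthwhile spelling-out of a point the paper leaves implicit.
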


\section{The second derivative of the exponential map}

The advantage of phrasing bifurcations in terms of the vanishing of derivatives of $\ex$ is that we can now derive equations for these derivatives. These equations are in tensorial form and as such can be expressed in any coordinate system, however for generic coordinate systems this would be very cumbersome. Instead we can write the equations in terms of scalar invariants, in analogy with \eqref{firstord}, which allows for easy computation; in fact we will show the analysis reduces to a single scalar equation.

Let $J_2=D J_1/\partial \psi$, then  \[ \frac{D}{\partial \psi}\frac{D}{\partial s}\left(\frac{D J_1^a}{\partial s}\right)-\frac{D}{\partial s}\frac{D}{\partial \psi}\left(\frac{D J_1^a}{\partial s}\right)=R^a_{\ bcd}\left(\frac{D J_1^b}{\partial s}\right)J_1^c \dot{\gamma}^d, \] and, using the Jacobi equation,  \[ \frac{D}{\partial \psi}\left(-R^a_{\ bcd}\dot{\gamma}^b J_1^c \dot{\gamma}^d \right)-\frac{D}{\partial s}\left(\frac{DJ_2^a}{\partial s}+R^a_{\ bcd}J_1^b J_1^c \dot{\gamma}^d \right)=R^a_{\ bcd}\left(\frac{D J_1^b}{\partial s}\right)J_1^c \dot{\gamma}^d. \] Using the symmetries of the Riemann tensor this reduces to  \begin{equation} \frac{D^2J_2^a}{\partial s^2}+R^a_{\ bcd}\dot{\gamma}^b J_2^a \dot{\gamma}^d=(R^a_{\ bcd;e}+R^a_{\ ecd;b})\gd^b \gd^c J_1^d J_1^e+4R^a_{\ bcd}\frac{DJ_1^b}{\partial s}\gd^c J_1^d.\label{baz} \end{equation}  This is known as `Ba\.{z}a\'{n}ski's equation' and appears in \cite{bazanski}. Decomposing $J_2$ into a tangential and normal component via $J_2=\eta_2T+\xi_2N$, we express this equation in geodesic polar coordinates and then identify the coefficients in terms of scalar invariants. We find the two following equations: \beq \ddot{\eta}_2=4K\xi_1\dot{\xi}_1+(T^a\partial_a K)\xi_1^2,\quad \ddot{\xi}_2+K\xi_2=-(N^a\partial_a K)\xi_1^2.\label{secord} \eeq Based on the previous section, we know that to identify a cusp of $C_p$ we require both $\eta_2$ and $\xi_2$ to vanish simultaneously on $s=R(\psi)$. Looking at the tangential component first, we see that $\eta_2$ cannot be trivial (unlike at first order), however we can still find an exact solution: it is \[ \eta_2=-\xi_1\dot{\xi}_1. \] Since $\xi_1$ vanishes on $C_p$ this means we need only look to the vanishing of the normal component, $\xi_2$, to identify a cusp of $C_p$.  Some immediate (well-known) observations follow: the conjugate locus of a pole on a surface of revolution is a point (or empty), and a conjugate point along a line of symmetry must be a cusp of the conjugate locus. Both observations follow from \eqref{secord} since in each case $N^a\partial_a K=0$.

Now we can understand the creation or annihilation of cusps in the following intuitive picture: for each $p$ we consider the $\xi_1=0$ and $\xi_2=0$ contours in $T_p\mS$; the intersections between these curves mark the cusps of the conjugate locus. As $p$ moves in the surface these two contours will vary leading to bifurcations. An `arc' bifurcation is shown in Figure \ref{contours}; we see the two contours come to intersect one another transversally leading to the creation of two new cusps of the conjugate locus.

\begin{figure}
  \centering
 {\includegraphics[width=0.55\textwidth]{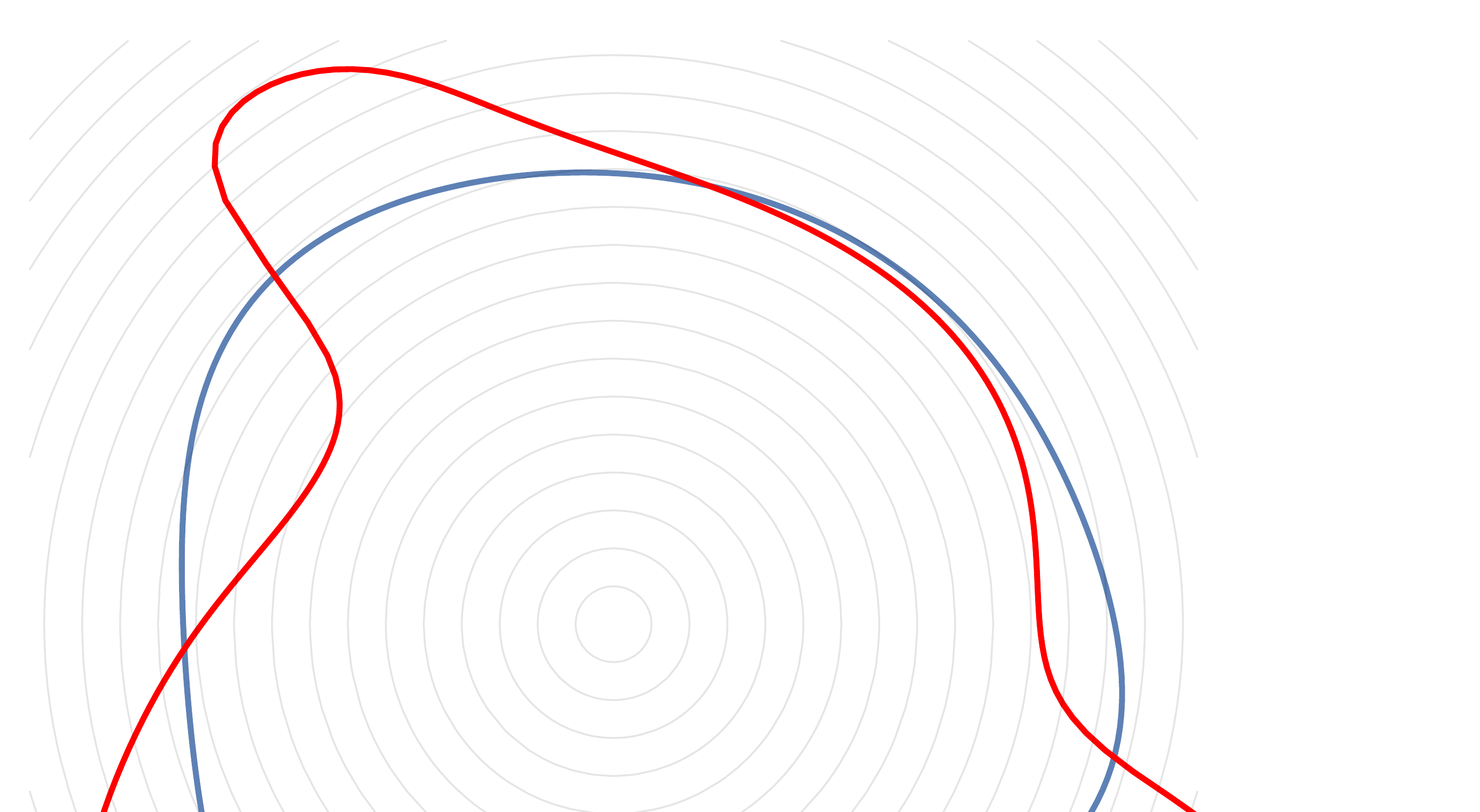}\hspace{-1cm}\includegraphics[width=0.5\textwidth]{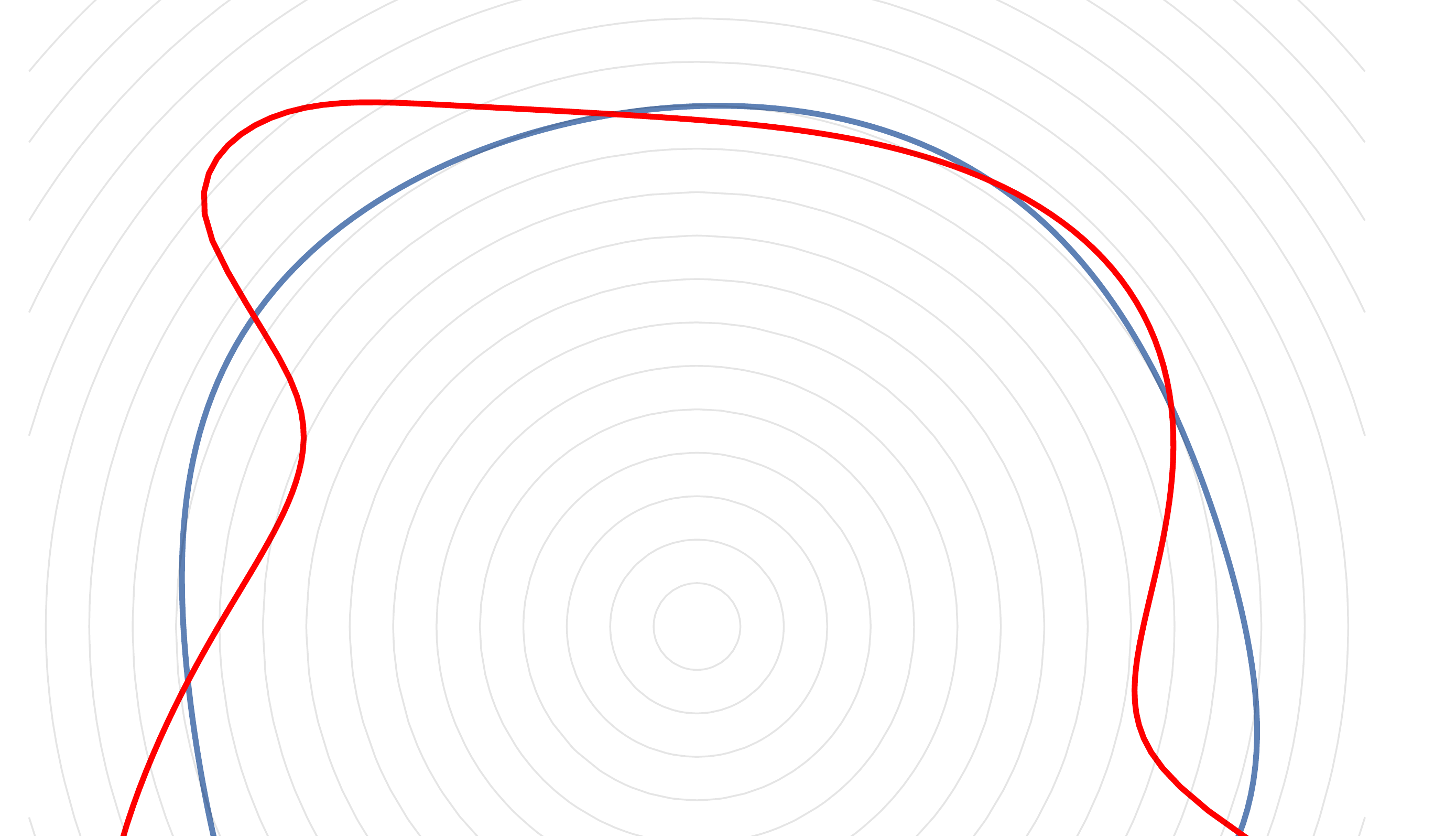}} \caption{The $\xi_1=0$ (blue) and $\xi_2=0$ (red) contours in $T_p\mS$ for the two points labeled (a) and (b) in Figure \ref{regions}. Note: two more intersections are not shown.} \label{contours}
\end{figure}

\section{The third derivative of the exponential map}

While the picture from the previous section is informative, we still need to view a sector of geodesics to identify a bifurcation, rather than simply follow an individual geodesic. However in Section 2 we have shown that there is an $A_2$ singularity in $R$ at $\psi=\psi_0$ if the third derivative of the exponential map also vanishes at $x(R(\psi_0),\psi_0)$; we show in this section that this again reduces to a single scalar equation.

Let $J_3=DJ_2/\partial \psi$, and following the procedure of the previous section we find an equation for $J_3$ which is in the Appendix \eqref{j3eq}. As before we write $J_3=\eta_3T+\xi_3N$ and equation \eqref{j3eq} separates into two scalar equations: \beq
  \ddot{\eta}_3=6K(\xi_1\xi_2)_{,s}+3(T^a\partial_a K)\xi_1\xi_2+6(N^a\partial_a K)\xi_1^2\dot{\xi}_1+(T^aN^b\nabla_a\partial_b K)\xi_1^3
\eeq and \begin{align}
  \ddot{\xi}_3+K\xi_3=&-(N^aN^b\nabla_a\partial_b K)\xi_1^3-2K^2\xi_1^3-3(N^a\partial_a K)\xi_1\xi_2 \nonumber  \\  &+3(T^a\partial_a K)\xi_1^2\dot{\xi}_1+6K\xi_1\dot{\xi}_1^2. \label{xitilde}
\end{align} If we wish to identify when an $A_2$ singularity of $R$ occurs along a particular geodesic we would need both $\eta_3$ and $\xi_3$ to vanish simultaneously at a cusp ($\xi_1=\xi_2=0$). We can again find an exact solution for the tangential component, \[ \eta_3=-\xi_1\dot{\xi}_2-2\dot{\xi}_1\xi_2, \] which vanishes at cusps of $C_p$, so we need only look for the vanishing of the normal component, $\xi_3$.

Consider the following experiment: on the triaxial ellipsoid, we allow the point $p$ to move along the ``middle'' ellipse, passing through an umbilic point. We consider the geodesic which emanates from $p$ along the line of symmetry; as such the $\xi_1\xi_2$ term vanishes in \eqref{xitilde}, and only the first order terms contribute to the third order equation. We simultaneously solve the geodesic equations, the $\xi_1$ equation \eqref{firstord}, and the $\xi_3$ equation \eqref{xitilde} until the value of $s=R$ where $\xi_1(R)=0$, then record $\xi_3(R)$ for each $p$. As expected, $\xi_3(R)$ passes through zero as $p$ passes through the umbilic point and as such there occurs a cusp bifurcation with the annihilation of 2 cusps. We can say the same for the symmetric geodesic traveling in the opposite direction, and it would not take much more to show that the conjugate locus degenerates to a point. While we won't go further as this is a well known phenomenon on the ellipsoid, we emphasise that the methods of this paper do not rely on the existence of an integral of the geodesic equations.

\begin{wrapfigure}{r}{0.41\linewidth}
\vspace{-0.5cm}
  \includegraphics[width=0.4\textwidth]{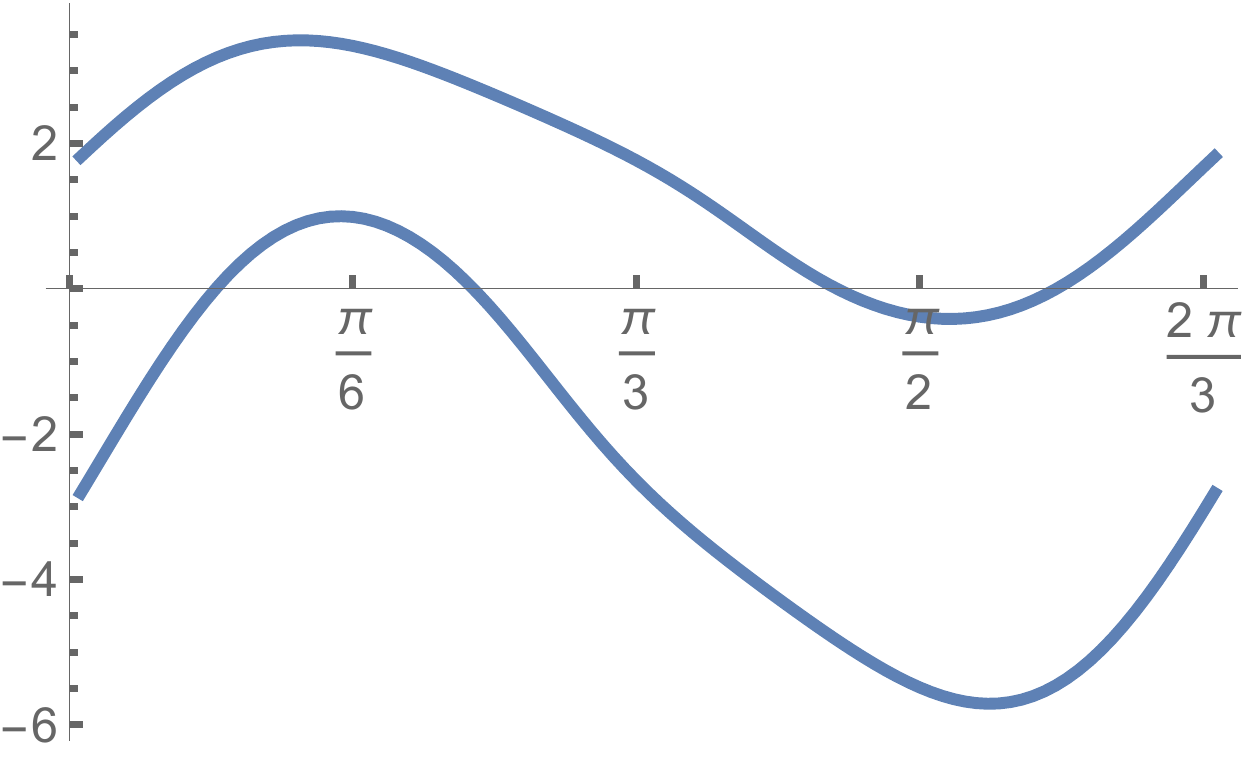} \caption{$\phi_0$ versus $\xi_3(R)$ for the equatorial geodesic as described in the text.}\label{xibb}
  \vspace{-0.5cm}
\end{wrapfigure}

As a more elaborate example, let us consider a surface for which we know the geodesic flow is not integrable: the $n=3$ spherical harmonic surface described in the Introduction and shown in Figure \ref{regions}. We focus on the symmetric equatorial geodesic, i.e.\ $p=(\pi/2,\phi_0)$ with $\dot{\theta}(p)=0$, and allow $\phi_0$ to vary. For each value of $\phi_0$ we record $\xi_3(R)$ as described in the previous paragraph, and we show the results in Figure \ref{xibb}. Notice there are two curves, one for each of $\dot{\phi}(0)>0$ and $<0$. The vanishing of $\xi_3(R)$ for certain values of $\phi_0$ shows the locations of the cusp bifurcations, in agreement with Figure \ref{regions}. Now we can intuitively understand the mechanism for this bifurcation: for certain base points $p$ the third derivative of the exponential map vanishes before the conjugate point, for some $p$ after the conjugate point. At a certain $p$ therefore, the third derivative vanishes {\it at} the conjugate point and this high order focusing leads to singularity in $R$ and generates a change in the number of cusps of $C_p$.

\section{Classification of the cusps}

Based on the previous analysis we can now classify the cusps of the conjugate locus according to the singularity of $R$. We will denote by $\alpha(\psi)=x(R(\psi),\psi)$ the image of $R$ in $\mS$, i.e.\ the conjugate locus of $p$. Let $q$ be a point on this curve, and the following series \[ \left.\frac{d\alpha^a}{d\psi}\right|_q\delta\psi+\left.\frac{D}{d\psi}\frac{d\alpha^a}{d\psi}\right|_q\frac{\delta\psi^2}{2}+\ldots \] is the projection into $T_q\mS$ of  the Taylor series of $C_p$ at $q$. Now if we consider the $T,N$ vectors parallel propagated along the radial geodesic $\gamma(s,\psi)$, then $T(q),N(q)$ form a basis for $T_q\mS$, and hence the series just given can be written as a linear combination of these orthonormal vectors; from the leading terms in this series we can discern the local structure of $C_p$ at $q$. We shall use the notation $(n,m)$ to mean the series representation of $\alpha$ has leading terms of order $\delta\psi^n$ and $\delta\psi^m$ in the directions $T$ and $N$ respectively.

\begin{prop}
  If $R(\psi)$ has an $A_k$ singularity at $\psi=\psi_0$ then the conjugate locus is $(k+1,k+2)$ at $\alpha(\psi_0)$.
\end{prop}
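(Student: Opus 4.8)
The plan is to expand the displayed Taylor series of $\alpha$ term by term at $\psi_0$ and decompose each covariant derivative in the frame $\{T(q),N(q)\}$, reading off in which power of $\delta\psi$ the $T$- and $N$-components first become nonzero. The crucial simplification comes at the outset: although
\[
\frac{d\alpha^a}{d\psi}=R'(\psi)\,\dot\gamma^a\big(R(\psi),\psi\big)+J_1^a\big(R(\psi),\psi\big),
\]
the second summand vanishes identically in $\psi$ — by construction $R$ is defined so that $J_1$ (equivalently $\xi_1$) vanishes at $s=R(\psi)$, i.e.\ $J_1^a(R(\psi),\psi)\equiv0$. Hence $d\alpha/d\psi=R'(\psi)\,T|_{s=R(\psi)}$ along the whole curve: the velocity of $C_p$ is always a scalar multiple of $T$, so the problem collapses to differentiating a scalar ($R'$) times the single vector field $T|_{s=R}$ along $\alpha$.

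Next I would apply the Leibniz rule for a scalar times a vector field along a curve,
\[
\frac{D^{m}}{d\psi^{m}}\frac{d\alpha}{d\psi}=\sum_{j=0}^{m}\binom{m}{j}R^{(j+1)}(\psi)\,\mathcal D^{\,m-j}T,\qquad \mathcal D^{\,i}T:=\frac{D^{i}}{d\psi^{i}}\big(T|_{s=R}\big),
\]
and observe that only $\mathcal D^{0}T$ and $\mathcal D^{1}T$ matter for the leading-order count. Clearly $\mathcal D^{0}T=T|_{s=R}$. For $\mathcal D^{1}T$ I would use the chain rule for covariant differentiation along the composed curve, $\mathcal D^{1}T=\big(R'\tfrac{D}{\partial s}+\tfrac{D}{\partial\psi}\big)T|_{s=R}$, together with $\tfrac{DT}{\partial s}=\nabla_{\dot\gamma}\dot\gamma=0$ and the torsion-free symmetry $\tfrac{DT}{\partial\psi}=\tfrac{D}{\partial\psi}\partial_s\gamma=\tfrac{D}{\partial s}\partial_\psi\gamma=\tfrac{DJ_1}{\partial s}$, so $\mathcal D^{1}T=\tfrac{DJ_1}{\partial s}\big|_{s=R}$. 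Evaluating at $\psi_0$ along $\gamma_0:=\gamma(\cdot,\psi_0)$ and using $J_1=\xi_1 N$ with $N$ parallel gives $\mathcal D^{1}T(\psi_0)=\dot\xi_1(s_0)\,N(q)$, where $s_0=R(\psi_0)$.

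Finally I would read off the orders. The $A_k$ hypothesis gives $R'(\psi_0)=\dots=R^{(k)}(\psi_0)=0$ and $R^{(k+1)}(\psi_0)\neq0$, so in the Leibniz sum at $\psi_0$ every term with $j\le k-1$ disappears; hence $\frac{D^{m}}{d\psi^{m}}\frac{d\alpha}{d\psi}\big|_{\psi_0}=0$ for all $m\le k-1$ (the first $k$ Taylor coefficients vanish), the $\delta\psi^{k+1}$-coefficient is a nonzero multiple of $R^{(k+1)}(\psi_0)\,T(q)$ (purely tangential, nonzero), and the $\delta\psi^{k+2}$-coefficient equals $\tfrac1{(k+2)!}\big[(k+1)R^{(k+1)}(\psi_0)\dot\xi_1(s_0)\,N(q)+R^{(k+2)}(\psi_0)\,T(q)\big]$, whose $N$-component $\propto R^{(k+1)}(\psi_0)\dot\xi_1(s_0)$ is nonzero while no earlier coefficient had an $N$-component. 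Thus the leading orders are $k+1$ in the $T$-direction and $k+2$ in the $N$-direction, i.e.\ $C_p$ is $(k+1,k+2)$ at $q$. The steps needing care are the covariant chain rule and the binomial bookkeeping — in particular, that the $A_k$ condition is precisely what prevents the higher terms $\mathcal D^{\,i}T$ ($i\ge2$) from polluting the first $k+2$ coefficients — together with the appeal to $\dot\xi_1(s_0)\neq0$: this is exactly the implicit-function-theorem condition guaranteeing that $R(\psi)$ is a smooth function near $\psi_0$ (part of the standing hypotheses), and it is what rules out a hidden cancellation in the $N$-component at order $k+2$.
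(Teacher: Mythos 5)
Your proposal is correct and follows essentially the same route as the paper's proof: use the identity $J_1^a(R(\psi),\psi)\equiv 0$ to reduce $d\alpha^a/d\psi$ to $R'T^a$, iterate covariant differentiation with $DT^a/\partial\psi=DJ_1^a/\partial s=\dot{\xi}_1N^a$, and let the $A_k$ condition kill all lower-order terms. Your explicit binomial Leibniz expansion is simply a tidier, fully written-out version of the paper's recursion with its unspecified remainder terms $R^{(k)}G^a_k$, so no further comment is needed.
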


\begin{proof}
  Since $\alpha(\psi)=x(R(\psi),\psi)$, the tangent vector to $C_p$ is \[ \frac{d\alpha^a}{d\psi}=\frac{Dx^a}{\dd s}R'+\frac{Dx^a}{\dd \psi}=R'T^a+J_1^a\equiv R'T^a. \] Taking the covariant derivative we see \[ \frac{D}{d\psi}\frac{d\alpha^a}{d\psi}=R''T^a+R'\frac{DT^a}{\dd\psi}=R''T^a+R'\frac{DJ_1^a}{\dd s}. \]  Continuing in this manner it is easy to derive the following formula: \[ \frac{D^n}{d\psi^n}\frac{d\alpha^a}{d\psi}=R^{(n+1)}T^a+nR^{(n)}\frac{DJ_1^a}{\dd s}+\sum_{k=1}^{n-1}R^{(k)}G^a_k \quad n\geq 2 \] where $G^a_k$ is a progressively more and more complicated expression which we omit here (but use for Figure \ref{cusps}). Now since $DJ_1^a/\dd s=\dot{\xi}_{1}N^a$ we can write the leading terms of the series expansion of $C_p$ at $q$ when $R$ has an $A_k$ singularity as \[ T^a\left[R^{(k+1)}\frac{\delta\psi^{k+1}}{(k+1)!}+\ldots \right]+N^a\left[(k+1)R^{(k+1)}\dot{\xi}_{1}(R)\frac{\delta\psi^{k+2}}{(k+2)!}+\ldots \right] \]
\end{proof}

Hence if $R'\neq 0$ (an $A_0$ singularity of $R$) then locally $C_p$ is a parabola opening in the direction of $N$ if $R'<0$ and $-N$ if $R'>0$ (note $\dot{\xi}_{1}(R)<0$), and at an $A_1$ singularity $C_p$ has an `ordinary' cusp, which points towards or away from $p$ if $R''$ is positive or negative respectively, as expected \cite{myers}. If we recall the arc bifurcation $A_0\to A_2\to A_0$,  we see that as $R'$ passes through zero the conjugate locus goes through $(1,2)\to(3,4)\to(1,2)$. On the other hand, at a cusp bifurcation $A_1\to A_3\to A_1$, we see that the conjugate locus passes through $(2,3)\to(4,5)\to(2,3)$ as $R''$ passes through zero. Representative sketches are given in Figure \ref{cusps}.

\begin{figure}
  \centering
  \includegraphics[width=\textwidth]{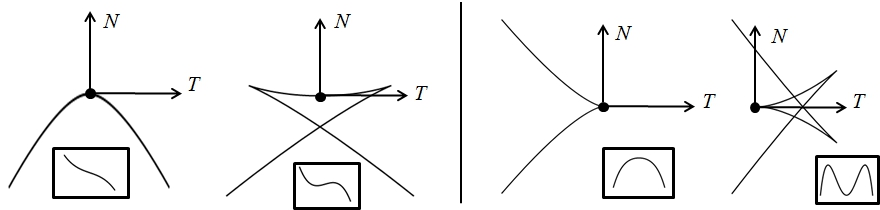} \caption{The local picture of $C_p$ for the arc (left) and cusp (right) bifurcations. A sketch of $R$ is given in the boxes.} \label{cusps}
\end{figure}

\section{Conclusions}

The main finding of this paper is we have shown that we can understand the creation and annihilation of cusps of the conjugate locus as due to higher order focusing of neighbouring geodesics along a particular geodesic. We have derived relatively simple equations for the second and third derivatives of the exponential map and have shown how the normal component contains all the relevant information. We have demonstrated this theory on the ellipsoid and spherical harmonic surfaces. Furthermore we have classified the cusps of the conjugate locus paying particular to the arc and cusp bifurcation.

\begin{wrapfigure}{r}{0.41\linewidth}
\vspace{-0.5cm}
  \includegraphics[width=0.4\textwidth]{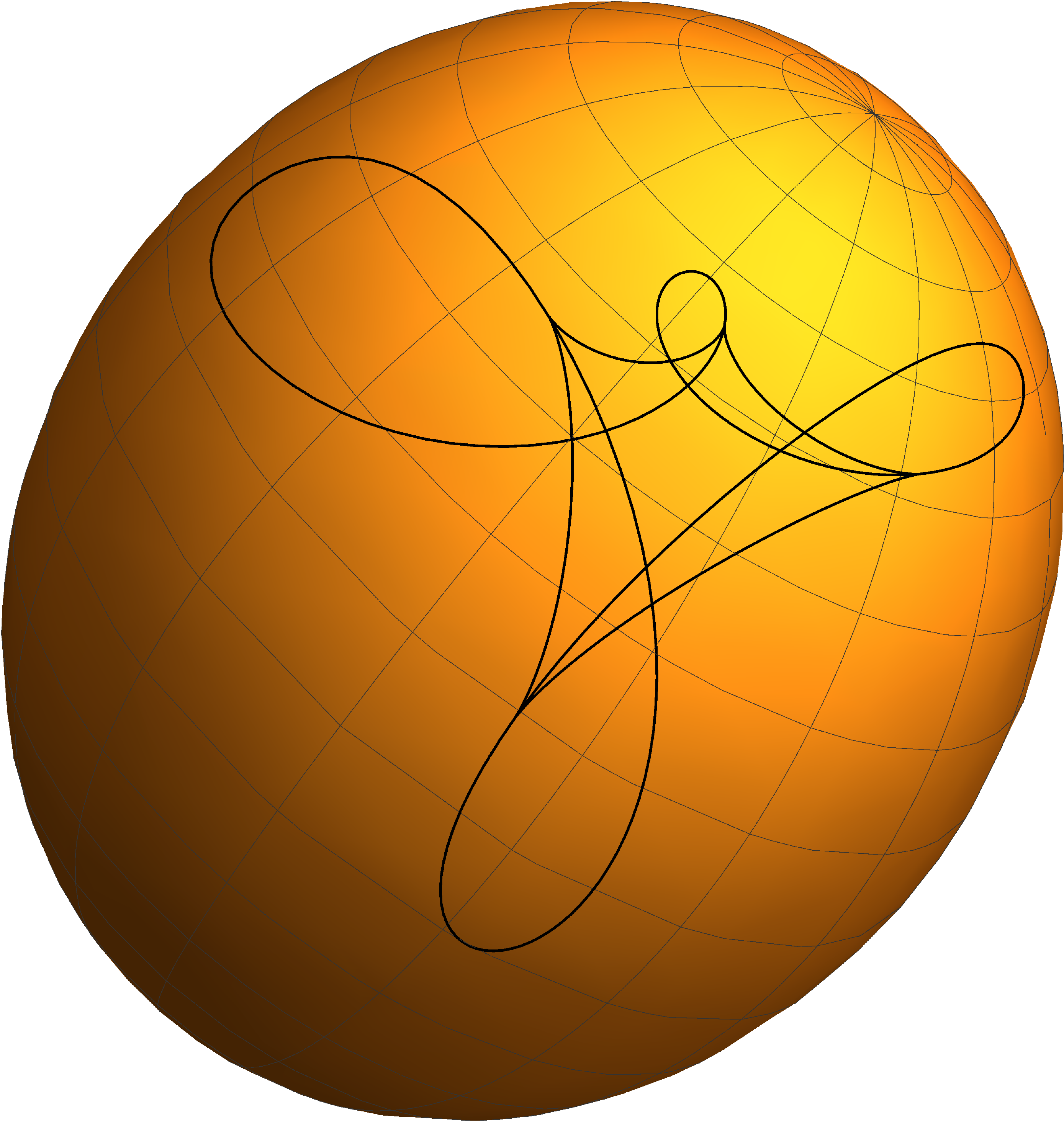} \caption{For a typical point on the triaxial ellipsoid, the image of the $\xi_1=0$ contour (i.e.\ the conjugate locus) and the $\xi_2=0$ contour.}\label{loops}
  \vspace{-0.5cm}
\end{wrapfigure}


The methods of this paper can be carried forward to answer other questions. For example, in Figure \ref{contours} we see the $\xi_1=0$ and $\xi_2=0$ contours in $T_p\mS$. We know the image of the $\xi_1=0$ contour in $\mS$ has cusps, what of the image of the $\xi_2=0$ contour; does this have cusps? The answer in general is no: this curve is made up of a number of loops, as many as there are cusps to $C_p$, and each loop passes through a cusp of $C_p$ (see Figure \ref{loops}). However cusps do develop at precisely the moment of a bifurcation in the conjugate locus. To see this, let $\rho=\rho(\psi)$ be the value of $s$ where $\xi_2=0$ for each $\psi$; the image of this curve in $\mS$ is $\beta(\psi)=x(\rho(\psi),\psi)$ whose tangent vector is  \[ \frac{d\beta^a}{d\psi}=\frac{\dd x^a}{\dd s}\rho'+\frac{\dd x^a}{\dd \psi}=T^a\rho'+J_1^a \] and since $J_1$ does not in general vanish on $\rho(\psi)$, $\beta^a$ will be regular unless both $\rho'$ and $J_1$ vanish simultaneously; but this is precisely what happens at bifurcation (since the $R$ and $\rho$ curves touch in $T_p\mS$).

While not studied in this paper, a deeper understanding of the conjugate locus $C_p$ can lead to insight regarding the cut locus $K_p$. For example, it is well known that endpoints of the cut locus are cusps of the conjugate locus \cite{poincare}\cite{myers}, and it might be assumed that bifurcations in $C_p$ where extra pairs of cusps are generated must lead to the development of additional branches of $K_p$. Indeed it is tempting to assume that the cut locus has half as many endpoints as the conjugate locus has cusps. This is not necessarily true however: close scrutiny of some of the bifurcations described in this paper clearly show extra cusps to $C_p$ may develop without extra branches of $K_p$ (this is based on the observation that if we consider the family of geodesic circles centred on $p$, the cusps of these circles trace out $C_p$ whereas the self-intersections trace out $K_p$). It seems we can only accept that if $C_p$ has $2n$ (ordinary) cusps then $K_p$ has $\leq n$ endpoints. It is perhaps worth further investigation of the bifurcations in the cut locus.

In conclusion we have developed techniques to gain a deeper understanding of the conjugate locus on smooth surfaces without relying on the existence of additional integrals of the geodesic flow.

\appendix


\section{$J_3$ equation}

 \begin{align}
\frac{D^2J_3^a}{\dd s^2}+R^a_{\ bcd}\gd^bJ_3^c\gd^d=&(-3R^a_{\ bcd;e}-3R^a_{\ ced;b})\gd^bJ_2^c\gd^dJ_1^e+6R^a_{\ bcd}\left(\frac{DJ_1^b}{\dd s}\gd^cJ_2^d+\frac{DJ_2^b}{\dd s}\gd^cJ_1^d\right) \nonumber  \\ &+\left[ \left(R^a_{\ bcd;e}+R^a_{\ ecd;b}\right)_{;f}+4R^a_{\ gcd}R^g_{\ efb}  \right]\gd^b\gd^cJ_1^dJ_1^eJ_1^f \nonumber \\ & +\left(6R^a_{\ bcd;e}+2R^a_{\ ebd;c} \right)\frac{DJ_1^b}{\dd s}\gd^cJ_1^dJ_1^e +4R^a_{\ bcd}\frac{DJ_1^b}{\dd s}\frac{DJ_1^c}{\dd s}J_1^d. \label{j3eq}
\end{align}

\begin{figure}[h]
\begin{picture}(200,170)
\put(50,0){{\includegraphics[width=0.4\textwidth]{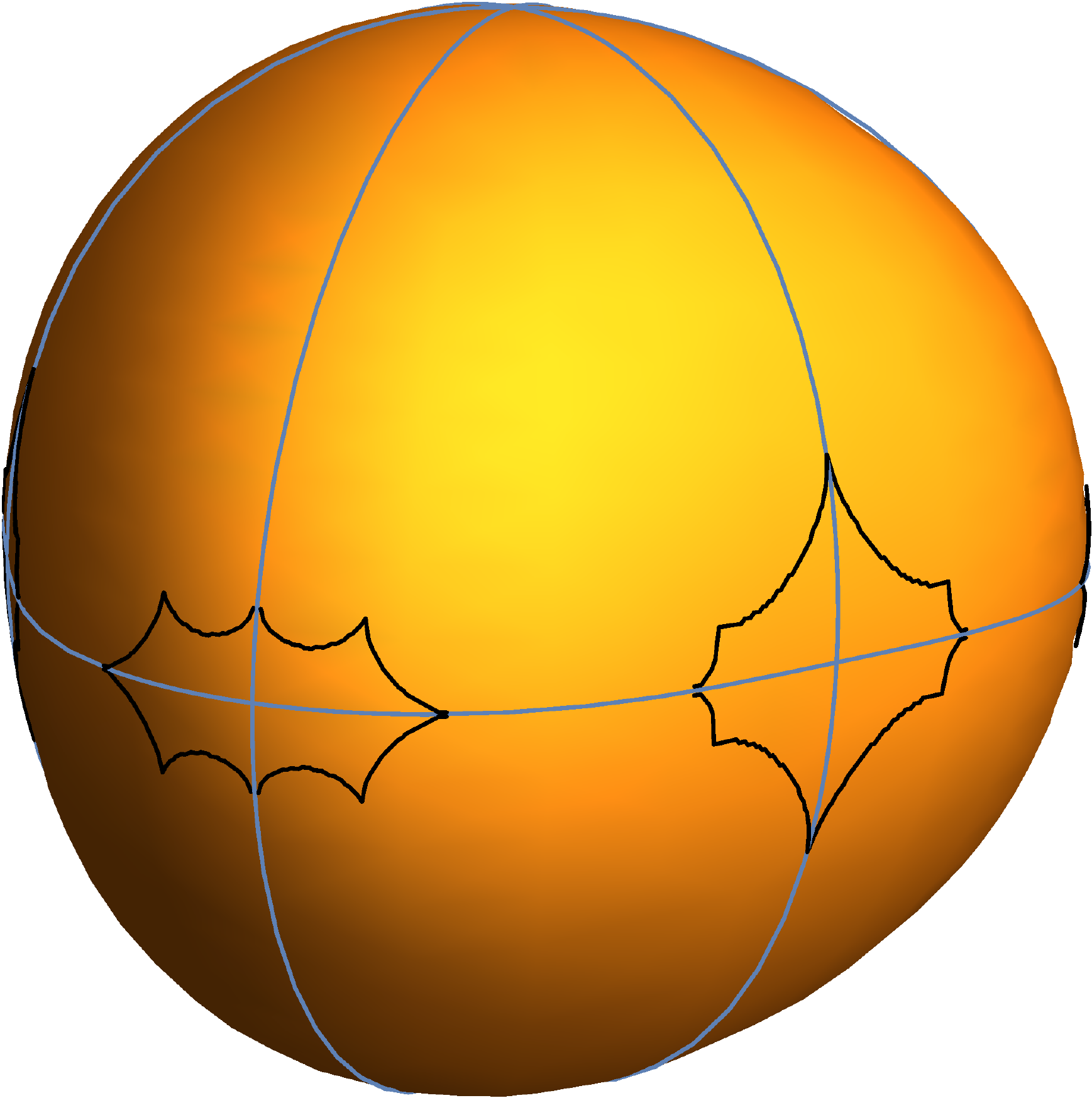}\hspace{1cm}\includegraphics[width=0.4\textwidth]{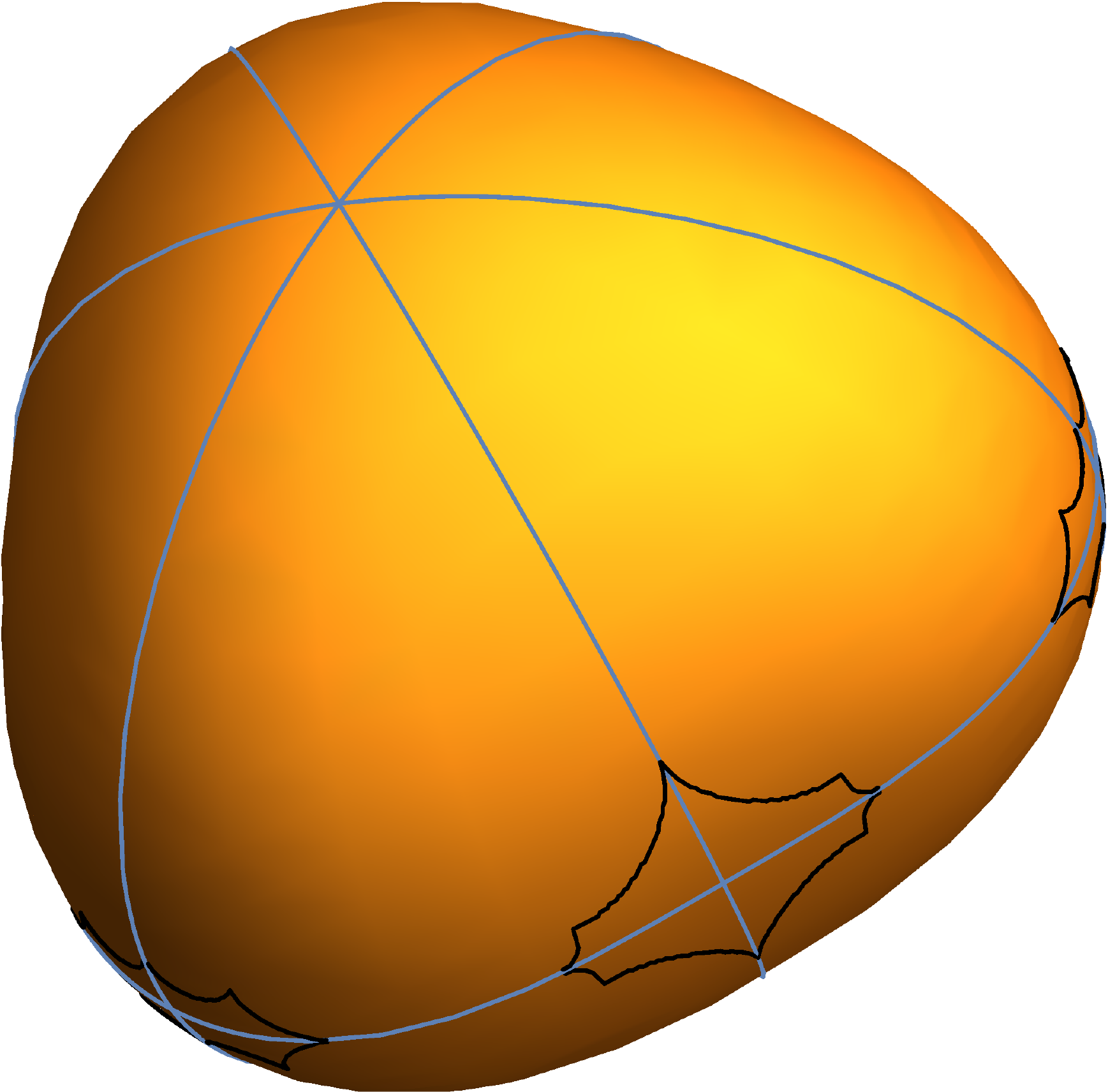}}}
\put(10,100){\hbox{(a)}}\put(10,60){\hbox{(b)}}\put(50,10){\hbox{(c)}}\put(20,30){\hbox{(d)}}
\put(95,70){\circle*{2}}\put(93,60){\circle*{2}}\put(120,54.5){\circle*{2}}\put(105,54.5){\circle*{2}}
\qbezier(30,100)(52,95)(92,73)\qbezier(30,60)(50,55)(90,60)\qbezier(40,33)(60,35)(102,54)\qbezier(67,15)(85,20)(118,52)
\end{picture}
\caption{The $n=3$ sectoral harmonic surface as described in the Introduction, with $\epsilon=0.1$. Gray curves mark the lines of symmetry and are included to bring out the shape of the surface. The dark contours straddling the equator mark the bifurcation sets: points outside and inside these contours have conjugate loci with 6 and 8 cusps respectively. The base points leading to the conjugate loci of Figure \ref{cpvarious} are labeled.} \label{regions}
\end{figure}


\bibliographystyle{plain}

\section*{\refname}

\bibliography{conjlocbib}

\end{document}